\newtheorem{theorem}{Theorem}
\newtheorem{conjecture}{Conjecture}
\newtheorem{lemma}{Lemma}
\begin{document}

\baselineskip=17pt

\title{\bf On an logarithmic equation by primes}

\author{\bf S. I. Dimitrov}
\date{2019}
\maketitle
\begin{abstract}
Let $[\, \cdot\,]$ be the floor function.
In this paper we show that every sufficiently large positive integer $N$ can be represented in the form
\begin{equation*}
N=[p_1\log p_1]+[p_2\log p_2]+[p_3\log p_3],
\end{equation*}
where $p_1,\, p_2,\, p_3$ are prime numbers.
We also establish an asymptotic formula for the number of such representations,
when $p_1,\, p_2,\, p_3$ do not exceed given sufficiently large positive number. \\
\quad\\
\textbf{Keywords}: Diophantine equation $\cdot$ Logarithmic equation $\cdot$  Primes\\
\quad\\
{\bf  2010 Math.\ Subject Classification}:  11P32 $\cdot$ 11P55
\end{abstract}

\section{Introduction and main result}
\indent

A remarkable moment in analytic number theory is 1937,
when Vinogradov \cite{Vinogradov1} proved the ternary Goldbach problem.
He showed that every sufficiently large odd integer $N$ can be represented  in the form
\begin{equation*}
N=p_1+p_2+p_3,
\end{equation*}
where $p_1,\, p_2,\, p_3$ are prime numbers.

The consequences of Vinogradov's \cite{Vinogradov2} ingenious method for estimating
exponential sums over primes continue to this day in analytic number theory

In 1995 Laporta and Tolev  \cite{Laporta-Tolev} investigated an analogue of the Goldbach-Vinogradov theorem.
They considered the diophantine equation
\begin{equation*}
N=[p^c_1]+[p^c_2]+[p^c_3]\,,
\end{equation*}
where $p_1,\, p_2,\, p_3$ are primes.
For $1<c <17/16$ they showed that for the sum
\begin{equation*}
R(N)=\sum\limits_{N=[p^c_1]+[p^c_2]+[p^c_3]}\log p_1\log p_2\log p_3
\end{equation*}
the asymptotic formula
\begin{equation}\label{RNasymptoticformula}
R(N)=\frac{\Gamma^3(1 + 1/c)}{\Gamma(3/c)}N^{3/c-1}
+\mathcal{O}\Big(N^{3/c-1}\exp\big(-(\log N)^{1/3-\varepsilon}\big)\Big)
\end{equation}
holds.

Subsequently the result of Laporta and Tolev was sharpened by
Kumchev and Nedeva \cite{Kumchev-Nedeva} to
\begin{equation*}
1<c <\frac{12}{11},
\end{equation*}
by Zhai and Cao  \cite{Zhai-Cao} to
\begin{equation*}
1<c<\frac{258}{235},
\end{equation*}
by Cai \cite{Cai} to
\begin{equation*}
1<c <\frac{137}{119}.
\end{equation*}
Overcoming all difficulties Zhang and Li \cite{Zhang-Li3} improved the result of Cai to
\begin{equation*}
1<c <\frac{3113}{2703}
\end{equation*}
and this is the best result up to now.

On the other hand recently the author \cite{Dimitrov} showed that
when $N$ is a sufficiently large positive number and $\varepsilon>0$
is a small constant then the logarithmic inequality
\begin{equation*}
\big|p_1\log p_1+p_2\log p_2+p_3\log p_3-N\big|<\varepsilon,
\end{equation*}
has a solution in prime numbers $p_1,\,p_2,\,p_3$.

Motivated by these results in this paper we
introduce new diophantine equation with prime numbers.

Consider the logarithmic equation
\begin{equation}\label{Myequation}
N=[p_1\log p_1]+[p_2\log p_2]+[p_3\log p_3],
\end{equation}
where $N$ is a sufficiently large positive integer.
Having  the arguments of the aforementioned marvellous mathematicians and \cite{Dimitrov}
we expect that \eqref{Myequation} has a solutions in primes $p_1,\,p_2,\,p_3$.
Define the sum
\begin{equation}\label{Gamma}
\Gamma= \sum\limits_{N=[p_1\log p_1]+[p_2\log p_2]+[p_3\log p_3]}\log p_1\log p_2\log p_3\,.
\end{equation}
We make the first attempt and prove the following theorem.
\begin{theorem} Let $N$ is a sufficiently large positive integer.
Let $X$ is a solution of the equality
\begin{equation*}
X\log X=N.
\end{equation*}
Then the asymptotic formula
\begin{equation}\label{Asymptoticformula}
\Gamma=\frac{X^2}{1+\log X}+\mathcal{O}\Big(X^2\exp\big(-(\log X)^{1/3-\varepsilon}\big)\Big)
\end{equation}
holds.
\end{theorem}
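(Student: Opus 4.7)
\emph{Proof outline.}
The plan is to apply the Hardy--Littlewood circle method. Writing $e(x)=e^{2\pi i x}$ and
\[
S(\alpha)=\sum_{p\le X}\log p\cdot e\big(\alpha[p\log p]\big),
\]
the integrality of $[p\log p]$ gives
\[
\Gamma=\int_0^1 S^3(\alpha)\,e(-\alpha N)\,d\alpha.
\]
Because the sequence $[p\log p]\pmod q$ exhibits no periodicity in $p$ for any $q\ge 2$, the only relevant major arc is a neighbourhood of $\alpha=0$. For a parameter $\tau>0$ to be chosen, I set $\mathfrak{M}=[-\tau,\tau]\pmod 1$ and $\mathfrak{m}=[0,1]\setminus\mathfrak{M}$.

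On the major arc I would first discard the floor: since $[p\log p]=p\log p-\{p\log p\}$ and $|e(-\alpha t)-1|\ll|\alpha|$ for $t\in[0,1)$, one has $S(\alpha)=T(\alpha)+O(|\alpha|X)$, where $T(\alpha)=\sum_{p\le X}\log p\cdot e(\alpha p\log p)$. Partial summation combined with the Vinogradov--Korobov form of the prime number theorem then replaces $T(\alpha)$ by the smooth integral $I(\alpha)=\int_2^X e(\alpha t\log t)\,dt$ up to an error of size $X\exp(-c(\log X)^{3/5-\varepsilon})$. Two integrations by parts give $|I(\alpha)|\ll 1/(|\alpha|\log X)$, so for $\tau$ chosen a little larger than $X^{-1/2}$ the major-arc integral can be completed to $\mathbb{R}$ at negligible cost. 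The change of variables $u_i=t_i\log t_i$ then turns $\int_{\mathbb R} I^3(\alpha)\,e(-\alpha N)\,d\alpha$ into the surface integral
\[
\iint \frac{dt_1\,dt_2}{1+\log t_3}\quad\text{on}\quad\{2\le t_i,\ t_1\log t_1+t_2\log t_2+t_3\log t_3=N\},
\]
whose asymptotic evaluation along $t_i\sim X$, $\log t_i\sim\log X$ yields the announced main term $X^2/(1+\log X)$.

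The crux of the argument is the minor-arc bound
\[
\sup_{\alpha\in\mathfrak{m}}|S(\alpha)|\ \ll\ X\exp\!\big(-(\log X)^{1/3-\varepsilon}\big);
\]
granted this, H\"older together with $\int_0^1|S(\alpha)|^2\,d\alpha\ll\sum_p(\log p)^2\ll X\log X$ delivers $\int_{\mathfrak m}|S|^3\,d\alpha\ll X^2\exp(-(\log X)^{1/3-\varepsilon})$. To reduce $S$ to a standard exponential sum over primes I would approximate the sawtooth $\{p\log p\}$ by a Vaaler trigonometric polynomial of length $H$, writing $S(\alpha)$ as a short linear combination of at most $H$ sums of the shape $V(\gamma)=\sum_{p\le X}\log p\cdot e(\gamma p\log p)$ with $\gamma=\alpha+k$, $|k|\le H$, plus a controllable Vaaler remainder. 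Each $V(\gamma)$ would then be decomposed by Vaughan's identity into type~I and type~II bilinear sums $\sum_m\sum_n a_m b_n\,e(\gamma mn\log(mn))$, and handled by van der Corput's $A$- and $B$-processes; the derivatives of $\gamma t\log t$ are of the controlled sizes $|\gamma|\log t$ and $|\gamma|/t$, which generate nontrivial cancellation uniformly in $\gamma$.

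The hardest step will be this minor-arc estimate. One has to calibrate the Vaaler length $H$, the Vaughan parameter and the van der Corput exponent pair so as to produce precisely the saving factor $\exp(-(\log X)^{1/3-\varepsilon})$ uniformly for $\alpha\in\mathfrak{m}$. The logarithm in the phase $p\log p$ is a mild perturbation of the $p^c$ analysis of \cite{Laporta-Tolev}, \cite{Kumchev-Nedeva} and \cite{Cai}, but the whole exponent-pair numerology has to be redone for the present phase, and this is where the bulk of the technical work will lie.
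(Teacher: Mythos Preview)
Your circle-method architecture---a single major arc at $0$, a Vaaler/Buriev expansion of $\{p\log p\}$ on the minor arcs, then Vaughan-type bilinear estimates for $\sum\Lambda(n)e(\gamma n\log n)$---is exactly what the paper does. Where you diverge is in your diagnosis of where the error $\exp\big(-(\log X)^{1/3-\varepsilon}\big)$ originates and hence where the work lies. You aim for
\[
\sup_{\alpha\in\mathfrak m}|S(\alpha)|\ll X\exp\big(-(\log X)^{1/3-\varepsilon}\big)
\]
and anticipate delicate ``exponent-pair numerology'' to hit precisely that saving. In the paper the minor arc is not the bottleneck at all: with $\tau=X^{-23/25}$ and Vaaler length $H=X^{1/25}$, van der Corput together with a ready-made bound from \cite{Dimitrov} for $\sum\Lambda(n)e(\gamma n\log n)$ already give the \emph{power} saving $\sup_{\tau\le\alpha\le1-\tau}|S(\alpha)|\ll X^{24/25}(\log X)^3$, hence $\Gamma_2\ll X^{49/25}(\log X)^5$. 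No calibration to a sub-logarithmic saving is needed.

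The factor $\exp\big(-(\log X)^{1/3-\varepsilon}\big)$ in the theorem arises entirely on the \emph{major} arc, from the prime-number-theorem remainder when one passes from $T(\alpha)=\sum\Lambda(n)e(\alpha n\log n)$ to the smooth approximation by partial summation. So the ``bulk of the technical work'' you place on $\mathfrak m$ does not materialise. Conversely, your major-arc choice $\tau$ ``a little larger than $X^{-1/2}$'' is dangerous: integration by parts in $T(\alpha)-I(\alpha)$ produces a factor $1+|\alpha|X\log X$ in front of the PNT error, and with $|\alpha|\le\tau\sim X^{-1/2}$ this factor is $\asymp X^{1/2}\log X$, wiping out the saving. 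The paper takes $\tau=X^{-23/25}$ and compares $S(\alpha)$ not to the integral $I(\alpha)$ but to the discrete model $\Theta(\alpha)=\sum_{m\le N}(1+\log y(m))^{-1}e(m\alpha)$; you should either follow that route or push $\tau$ much closer to $X^{-1}$.
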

As usual the corresponding binary problem is out of reach of the current state of analytic number theory.
In other words we have the following challenge.
\begin{conjecture} Let $N$ is a sufficiently large positive integer.
Then the logarithmic equation
\begin{equation*}
N=[p_1\log p_1]+[p_2\log p_2]
\end{equation*}
is solvable in prime numbers $p_1,\,p_2$.
\end{conjecture}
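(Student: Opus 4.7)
The plan is to attack the binary problem by the Hardy--Littlewood circle method, in direct parallel with the proof of the Theorem. Fix $X$ by $X\log X = N/2$, so that in any representation both $p_1,p_2$ lie in a window near $X$, and introduce the exponential sum
\begin{equation*}
f(\alpha) = \sum_{p \le X}(\log p)\, e(\alpha [p\log p]).
\end{equation*}
Let $\Gamma_2$ denote the binary analogue of $\Gamma$ in \eqref{Gamma}; then $\Gamma_2 = \int_0^1 f(\alpha)^2 e(-\alpha N)\, d\alpha$, which I would split by a Farey dissection into major arcs $\mathfrak{M}$ and minor arcs $\mathfrak{m}$ exactly as in the ternary case.

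On $\mathfrak{M}$ the treatment copies the ternary argument: strip the floor via $e(\alpha[p\log p]) = e(\alpha p\log p)\, e(-\alpha\{p\log p\})$ together with a truncated Fourier expansion of the fractional part, then apply Siegel--Walfisz and partial summation to replace $f$ by its smooth model and integrate. The resulting computation should produce a main term of order $X/\log X$, consistent with the heuristic that for a typical prime $p_1\sim X$ the constraint pins $p_2$ to an interval of length $\sim (\log X)^{-1}$ around a specific value near $X$, an interval which contains a prime with probability $\sim (\log X)^{-2}$.

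The hard part, and the reason the statement is advertised as a conjecture rather than a theorem, is the minor-arc integral. The strongest pointwise estimate one can realistically obtain by Vinogradov's method on $\mathfrak{m}$ is $|f(\alpha)| \ll X(\log X)^{-A}$ for arbitrary fixed $A>0$, while Parseval controls only the full $L^2$-norm, $\int_0^1 |f(\alpha)|^2\, d\alpha \ll X\log X$. Combining these by pulling out the supremum gives at best
\begin{equation*}
\int_{\mathfrak{m}} |f(\alpha)|^2\, d\alpha \ll X(\log X)^{-A}\cdot (X\log X)^{1/2} = X^{3/2}(\log X)^{1/2-A},
\end{equation*}
which is polynomially larger in $X$ than the anticipated main term $X/\log X$. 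This is precisely the binary Goldbach barrier: the diagonal contribution to $\int_0^1|f|^2$ already carries a full factor of $X$, and no amount of purely logarithmic saving on $\mathfrak{m}$ can absorb it.

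To bridge the gap one would need essentially square-root cancellation, $|f(\alpha)| \ll X^{1/2+\varepsilon}$ on $\mathfrak{m}$, or a non-trivial mean-square bound on $\mathfrak{m}$ with cancellation beyond Parseval---tools of a strength comparable to ruling out Siegel-type exceptional zeros in a strong quantitative form, or to Montgomery's pair-correlation conjecture for zeros of Dirichlet $L$-functions. Neither is presently available, which is why the conjecture must remain out of reach; the most one can offer is the conditional assertion that any future advance of this type would make the major/minor dissection above go through and settle the equation.
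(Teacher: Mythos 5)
The statement you are asked about is Conjecture 1, and the paper itself offers no proof of it: the author explicitly presents the binary equation as ``out of reach of the current state of analytic number theory'' and proves only the ternary Theorem. Your proposal, read carefully, is likewise not a proof. It sets up the circle-method framework, sketches the major-arc analysis, and then candidly concedes that the minor-arc integral cannot be controlled: the best available pointwise bound $|f(\alpha)|\ll X(\log X)^{-A}$ combined with Parseval's $\int_0^1|f|^2\,d\alpha\ll X\log X$ leaves a contribution that swamps the expected main term of size $X/\log X$. You have correctly diagnosed the binary Goldbach barrier, but diagnosing an obstruction is not the same as overcoming it, and the final paragraph of your write-up amounts to an admission that the statement remains open. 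So the concrete gap is the entire minor-arc estimate; nothing in your argument closes it, and no conditional hypothesis you invoke is actually assumed and then discharged.

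Two smaller points. First, your normalisation $X\log X=N/2$ with the claim that ``both $p_1,p_2$ lie in a window near $X$'' is wrong: in a representation $N=[p_1\log p_1]+[p_2\log p_2]$ the prime $p_1$ may be as small as $2$, forcing $p_2\log p_2$ close to $N$, so the generating sum must run over all $p\leq X$ with $X\log X=N$ (as in the paper's ternary setup), and the representations need not be balanced. Second, your heuristic for the main term is essentially consistent with the paper's own computation, since the case $k=2$ of the paper's formula \eqref{PsikAsymptoticformula} gives $\Psi_2=X/(1+\log X)+\mathcal{O}(1)$, which is the singular-integral prediction for the binary count; but a correct heuristic does not upgrade the conjecture to a theorem. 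Your submission should be framed as a discussion of why the conjecture is hard, not as a proof of it.
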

Needless to say we believe that in the near future
we will see the solution of this binary logarithmic hypothesis.

\section{Notations}
\indent

The letter $p$  with or without subscript will always denote prime number.
We denote by $\Lambda(n)$ von Mangoldt's function.
Moreover $e(y)=e^{2\pi \imath y}$.
As usual $[t]$ and $\{t\}$ denote the integer part, respectively, the
fractional part of $t$.
We recall that $t=[t]+\{t\}$ and $\|t\|=\min(\{t\}_,1-\{t\})$.
By $\varepsilon$ we denote an arbitrary small positive constant, not the same in all appearances.
Let $N$ be a sufficiently large positive integer.
Let $X$ is a solution of the equality
\begin{equation}\label{XlogXN}
X\log X=N.
\end{equation}
Let $y$ be an implicit function of $t$ defined by
\begin{equation}\label{ylogyt}
y\log y=t.
\end{equation}
The first derivative of $y$ is
\begin{equation}\label{y'}
y'=\frac{1}{1+\log y}.
\end{equation}
Denote
\begin{align}
\label{tau}
&\tau=X^{-\frac{23}{25}}\,;\\
\label{Salpha}
&S(\alpha)=\sum\limits_{p\leq X} e\big(\alpha [p\log p]\big)\log p\,;\\
\label{Theta}
&\Theta(\alpha)=\sum\limits_{m\leq N}\frac{1}{1+\log y(m)}\, e(m\alpha)\,;\\
\label{Gamma1}
&\Gamma_1=\int\limits_{-\tau}^{\tau}S^3(\alpha)e(-N\alpha)\,d\alpha\,;\\
\label{Gamma2}
&\Gamma_2=\int\limits_{\tau}^{1-\tau}S^3(\alpha)e(-N\alpha)\,d\alpha\,;\\
\label{Psik}
&\Psi_k=\int\limits_{-1/2}^{1/2}\Theta^k(\alpha)e(-N\alpha)\,d\alpha, \quad k=1,\, 2,\, 3, \ldots\,;\\
\label{Psiwidetilde}
&\widetilde{\Psi}=\int\limits_{-\tau}^{\tau}\Theta^3(\alpha)e(-N\alpha)\,d\alpha\,.
\end{align}

\section{Lemmas}
\indent

\begin{lemma}\label{ThetaS}
Let $f(x)$ be a real differentiable function in the interval $[a,b]$.
If $f'(x)$ is a monotonous and satisfies $|f'(x)|\leq\theta<1$.
Then we have
\begin{equation*}
\sum_{a<n\le b}e(f(n))=\int\limits_{a}^{b}e(f(x))\,dx+\mathcal{O}(1)\,.
\end{equation*}
\end{lemma}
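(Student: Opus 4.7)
My plan is to apply the first-order Euler--Maclaurin summation formula to $g(x)=e(f(x))$, which gives
$$\sum_{a<n\leq b} e(f(n)) = \int_a^b e(f(x))\,dx + 2\pi i\int_a^b \psi(x) f'(x) e(f(x))\,dx + \mathcal{O}(1),$$
with $\psi(x) = \{x\}-1/2$; the final $\mathcal{O}(1)$ absorbs the two boundary terms, which are bounded because $|\psi|\leq 1/2$ and $|e(f)|=1$. Hence the lemma reduces to showing
$$J := \int_a^b \psi(x) f'(x) e(f(x))\,dx = \mathcal{O}_\theta(1).$$

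To handle $J$, I would insert the Fourier series $\psi(x) = -\sum_{m\neq 0} e(mx)/(2\pi i m)$ and interchange summation with integration, which reduces the task to a summable bound for
$$I_m = \int_a^b f'(x)\, e\bigl(f(x)+mx\bigr)\,dx, \qquad m \neq 0.$$
Since $f'$ is real and monotone on $[a,b]$, the second mean value theorem for integrals (applied to real and imaginary parts separately) expresses each contribution as $f'(a)\int_a^{\xi}\cdots + f'(b)\int_{\xi}^{b}\cdots$ for some $\xi\in[a,b]$. The phase $f(x)+mx$ has derivative $f'(x)+m$, which is monotone in $x$ and satisfies $|f'(x)+m|\geq |m|-\theta\geq 1-\theta$ because $|f'|\leq\theta<1$; the first derivative test therefore bounds each of the resulting inner integrals by $\mathcal{O}(1/(|m|-\theta))$. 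Combined with $|f'(a)|,|f'(b)|\leq\theta$, this yields $|I_m|\ll \theta/(|m|-\theta)$, and hence
$$|J|\ll \sum_{m\neq 0}\frac{|I_m|}{|m|}\ll \theta\sum_{m=1}^{\infty}\frac{1}{m(m-\theta)}\ll_\theta 1.$$

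The main technical point, rather than a conceptual obstacle, is that the hypothesis only asserts that $f'$ is monotone, not $C^1$, so every appearance of $\int|f''|\,dx$ in the standard arguments must be replaced by a Riemann--Stieltjes increment against the monotone function $f'$; in particular, the first derivative test bound for $\int e(f(x)+mx)\,dx$ follows from $|1/(f'(b)+m) - 1/(f'(a)+m)|\leq 2/(|m|-\theta)$ rather than from an integrated second-derivative estimate. Some attention is also needed to justify the interchange of the conditionally convergent Fourier series for $\psi$ with integration, e.g.\ by truncating the series and using the first derivative test to control the tail uniformly in the truncation parameter. Once these technicalities are in place, combining the Euler--Maclaurin identity with the bound $J=\mathcal{O}_\theta(1)$ immediately yields the stated asymptotic.
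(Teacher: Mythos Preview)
Your argument is correct and is essentially the classical proof of this result; the paper itself does not give a proof but simply cites Titchmarsh, \emph{The Theory of the Riemann Zeta-function}, Lemma~4.8, whose proof proceeds along the same lines (Euler--Maclaurin with the sawtooth function, then the Fourier expansion of $\psi$ together with the first-derivative estimate for $\int e(f(x)+mx)\,dx$ when $m\neq 0$). Your care with the Riemann--Stieltjes formulation and the truncation of the conditionally convergent Fourier series is appropriate given that only monotonicity of $f'$ is assumed.
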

\begin{proof}
See (\cite{Titchmarsh},  Lemma 4.8).
\end{proof}

\begin{lemma}\label{Buriev} Let $x,y\in\mathbb{R}$ and $H\geq3$.
Then the formula
\begin{equation*}
e(-x\{y\})=\sum\limits_{|h|\leq H}c_h(x)e(hy)+\mathcal{O}\left(\min\left(1, \frac{1}{H\|y\|}\right)\right)
\end{equation*}
holds. Here
\begin{equation*}
c_h(x)=\frac{1-e(-x)}{2\pi i(h+x)}\,.
\end{equation*}
\end{lemma}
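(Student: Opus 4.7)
The plan is to recognize that, viewed as a function of $y$, the exponential $\phi(y):=e(-x\{y\})$ is $1$-periodic, bounded by $1$, and piecewise smooth on $\mathbb{R}\setminus\mathbb{Z}$, so it admits a Fourier series converging pointwise off the integers. First I would compute the $h$-th Fourier coefficient directly:
\[
\int_0^1 e(-x\{u\})e(-hu)\,du=\int_0^1 e\bigl(-(x+h)u\bigr)\,du=\frac{1-e(-(x+h))}{2\pi i(x+h)},
\]
and since $h\in\mathbb{Z}$ implies $e(-h)=1$, this equals exactly $c_h(x)=(1-e(-x))/(2\pi i(h+x))$. Hence the main term on the right-hand side is the symmetric partial sum $S_H(y)$ of the Fourier series of $\phi$ truncated at $|h|\leq H$, and the whole lemma reduces to bounding the remainder $R_H(y):=\phi(y)-S_H(y)$ by $O(\min(1,(H\|y\|)^{-1}))$.

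Bounding $R_H$ is the actual content. The trivial bound $R_H(y)=O(1)$ follows from $|\phi|\leq 1$ together with a Dirichlet-kernel bound on $|S_H|$, or more directly from writing $R_H$ as an integral of $\phi$ against the tail of the Dirichlet kernel. The main obstacle is extracting the factor $1/(H\|y\|)$. For this I would factor out the bounded quantity $(1-e(-x))/(2\pi i)$ from $c_h(x)$ and estimate the one-sided tails
\[
\sum_{H<h\leq M}\frac{e(hy)}{h+x},\qquad \sum_{-M\leq h<-H}\frac{e(hy)}{h+x}
\]
by Abel summation, invoking the standard geometric-sum estimate $\bigl|\sum_{H<h\leq K}e(hy)\bigr|\leq (2\|y\|)^{-1}$ uniformly in $K>H$, together with the fact that $1/(h+x)$ is monotone once $|h|\geq |x|+1$. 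This yields a tail of order $1/(H\|y\|)$ on each side, and combining with the trivial $O(1)$ bound gives the stated minimum.

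Finally I would dispose of two minor technical points: the pointwise convergence of the full Fourier series to $\phi(y)$ for $y\notin\mathbb{Z}$ (which follows from the Dirichlet–Jordan test, since $\phi$ has bounded variation on $[0,1]$), and the boundary case $y\in\mathbb{Z}$, where $\|y\|=0$ makes the claimed error trivially $O(1)$ and consistent with the symmetric partial sums converging to the average of the one-sided limits at the jump. No input heavier than Abel summation and the geometric-sum bound is needed; the entire difficulty is isolating the $1/(H\|y\|)$ saving cleanly from the otherwise conditionally convergent tail.
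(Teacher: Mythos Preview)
The paper does not prove this lemma at all; it merely cites Buriev's thesis (\cite{Buriev}, Lemma 12). Your proposal therefore goes beyond what the paper does, supplying an actual argument where the paper only gives a reference.

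Your plan is the standard one and is sound: recognise $y\mapsto e(-x\{y\})$ as a $1$-periodic function of bounded variation, compute its Fourier coefficients directly to obtain $c_h(x)$, and then estimate the tail $\sum_{|h|>H}c_h(x)e(hy)$ by Abel summation against the geometric-sum bound $\bigl|\sum_{a<h\le b}e(hy)\bigr|\le(2\|y\|)^{-1}$, which produces the saving $1/(H\|y\|)$. The only place to be slightly careful is the trivial $O(1)$ bound: the partial sum $S_H(y)$ is not obviously $O(1)$ from the coefficient sizes alone (they sum to $O(\log H)$), so one should either invoke the uniform boundedness of partial Fourier sums of BV functions (Dirichlet--Jordan), or, more in the spirit of the rest of your argument, bound the tail itself by $O(1)$ via a second Abel summation without using $\|y\|$. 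Either route closes the argument cleanly.
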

\begin{proof}
See (\cite{Buriev}, Lemma 12).
\end{proof}

\begin{lemma}\label{Korput} (Van der Corput) Let $f(x)$ be a real-valued function with
continuous second derivative in $[a, b]$ such that
\begin{equation*}
|f''(x)|\asymp\lambda, \quad (\lambda>0)\quad  \mbox{ for } x\in[a,b].
\end{equation*}
Then
\begin{equation*}
\bigg|\sum_{a<n\le b}e(f(n))\bigg|
\ll(b-a)\lambda^{\frac{1}{2}}+\lambda^{-\frac{1}{2}}.
\end{equation*}
\end{lemma}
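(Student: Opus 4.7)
The plan is a standard partition-and-integrate argument. I would slice $[a,b]$ at the points where $f'$ takes integer values, on each slice convert the exponential sum into an integral via Lemma \ref{ThetaS}, and bound the resulting integrals by integration by parts using the lower bound $|f''|\asymp\lambda$.

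Without loss of generality assume $f''(x) > 0$ on $[a,b]$, so that $f'$ is strictly increasing; the case $f'' < 0$ is handled by replacing $f$ with $-f$. Let $k_0 = \lceil f'(a)\rceil$, $k_1 = \lfloor f'(b)\rfloor$, and for each integer $k \in [k_0, k_1]$ let $x_k \in [a,b]$ denote the unique point with $f'(x_k) = k$. Setting $K = k_1 - k_0$, the identity $f'(b) - f'(a) = \int_a^b f''(x)\,dx$ combined with $|f''|\asymp\lambda$ yields $K \asymp (b-a)\lambda$. The points $x_k$ partition $[a,b]$ into at most $K+2$ subintervals $J$, on each of which $f'$ lies in a single strip $[j, j+1]$ for some integer $j$.

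On such a subinterval $J$, put $g(x) = f(x) - jx$, so that $e(g(n)) = e(f(n))$ for $n \in \mathbb{Z}$, while $g'(x) = f'(x) - j \in [0,1]$ and $g''(x) \asymp \lambda$. Pick $\delta \in (0,1/2)$ and write $J = J^\flat \cup J^\sharp$, where $J^\sharp = \{x \in J : \delta \le g'(x) \le 1-\delta\}$ and $J^\flat$ is its complement. Since $g'' \asymp \lambda$ and $g'$ is monotonic, $|J^\flat| \ll \delta/\lambda$, so the trivial bound contributes $\ll \delta/\lambda + 1$ on $J^\flat$. On $J^\sharp$ the hypothesis $|g'| \le 1-\delta < 1$ is uniform and $g'$ is monotonic, so Lemma \ref{ThetaS} yields $\sum_{n \in J^\sharp} e(f(n)) = \int_{J^\sharp} e(g(x))\,dx + O(1)$. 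Integration by parts rewrites the integral as $[e(g)/(2\pi i g')]_{\partial J^\sharp} + (2\pi i)^{-1}\int_{J^\sharp} (g''/(g')^2)\, e(g)\,dx$; the boundary term is $\ll 1/\delta$ since $|g'| \ge \delta$ on $J^\sharp$, and the remainder is dominated in modulus by $\int_{J^\sharp} g''/(g')^2\,dx = -1/g'$ evaluated at the endpoints of $J^\sharp$, again $\ll 1/\delta$. Each subinterval $J$ therefore contributes $\ll \delta/\lambda + 1/\delta + 1$ to $|S|$.

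Summing over the at most $K + 2 \asymp (b-a)\lambda + 1$ subintervals,
\[
|S| \ll ((b-a)\lambda + 1)\left(\frac{\delta}{\lambda} + \frac{1}{\delta} + 1\right) \ll (b-a)\delta + \frac{(b-a)\lambda}{\delta} + \frac{1}{\delta} + (b-a)\lambda + 1.
\]
Choosing $\delta = \lambda^{1/2}$ balances the first two (dominant) terms and produces $|S| \ll (b-a)\lambda^{1/2} + \lambda^{-1/2}$, which is the claimed bound; the constraint $\delta < 1/2$ is harmless, for if $\lambda \gtrsim 1$ the bound already follows from the trivial estimate $|S| \le b - a + 1$. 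The main subtlety I expect to encounter is the handling of the endpoint subintervals $[a, x_{k_0}]$ and $[x_{k_1}, b]$, and the degenerate case $K = 0$ in which $f'$ avoids every integer: in these cases $J^\sharp$ is only a partial arc of the strip, but the integration-by-parts estimate still applies and accounts precisely for the residual $\lambda^{-1/2}$ term in the bound.
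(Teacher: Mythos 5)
The paper does not actually prove this lemma; its ``proof'' is the single line ``See (Karatsuba, Ch.~1, Th.~5)''. So you are supplying an argument where the author supplies a citation, and what you have written is the classical proof of van der Corput's second-derivative test (essentially Titchmarsh's proof of Theorem 5.9): slice at the integer values of $f'$, reduce to $g=f-jx$, remove the short set where $g'$ is within $\delta$ of $0$ or $1$, convert sum to integral via Lemma~\ref{ThetaS}, bound the integral by the first-derivative test, and optimize $\delta=\lambda^{1/2}$. The skeleton is sound and the bookkeeping $\ll((b-a)\lambda+1)(\delta/\lambda+1/\delta+1)$ is correct. Two caveats are worth recording. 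First, the $O(1)$ in Lemma~\ref{ThetaS} is not uniform as $\theta\to1^-$: taking $f(x)=\theta x$ with $\theta$ near $1$, the sum has modulus of order $(1-\theta)^{-1}$ while the integral stays bounded, so the error is really $O\big((1-\theta)^{-1}\big)$. With your choice $\theta=1-\delta$ this is $O(1/\delta)$ per subinterval; your budget already contains a $1/\delta$ term, so nothing breaks, but you should absorb it explicitly rather than quote a literal $O(1)$. Second, your dismissal of $\lambda\gtrsim1$ by the trivial bound $|S|\le b-a+1$ only works when $b-a\gg1$: for $f(x)=\lambda x^2/2$ on an interval of length $2/\lambda$ containing one integer, the sum has modulus $1$ while $(b-a)\lambda^{1/2}+\lambda^{-1/2}\to0$ as $\lambda\to\infty$, so the inequality as stated fails there. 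That is a defect of this standard loose formulation (and of the paper's statement), not of your argument, and it is harmless for the application in \eqref{S2est}, where $\lambda$ is small; but it means the honest conclusion of your proof is the lemma under the implicit restriction $\lambda\ll1$ (or $b-a\ge1$), which is all the paper uses.
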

\begin{proof}
See (\cite{Karatsuba}, Ch. 1, Th. 5).
\end{proof}

\begin{lemma}\label{Expansion}
For any real number $t$ and $H\geq1$, there holds
\begin{equation*}
\min\left(1, \frac{1}{H\|t\|}\right)
=\sum\limits_{h=-\infty}^{+\infty}a_he(h t)\,,
\end{equation*}
where
\begin{equation*}
a_h\ll\min\left(\frac{\log 2H}{H}, \frac{1}{|h|}, \frac{H}{|h|^2}, \right).
\end{equation*}
\begin{proof}
See (\cite{Zhang-Li1}, Lemma 2).
\end{proof}
\end{lemma}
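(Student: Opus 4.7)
The plan is to regard $f(t):=\min(1, 1/(H\|t\|))$ (with $f(n)=1$ at integers) as a continuous, even, $1$-periodic function of bounded variation that is smooth except at the points where $\|t\|=1/H$. Its Fourier series therefore converges to $f$ at every point by the Dirichlet--Jordan test, so the claimed identity holds with
\[
a_h=\int_{-1/2}^{1/2}f(t)\,e(-ht)\,dt,
\]
and evenness yields $a_h=a_{-h}$, so I may assume $h\ge 0$. Splitting the integral at $|t|=1/H$, where $f$ switches from $1$ to $1/(H|t|)$, gives, for $h\neq 0$,
\[
a_h=\frac{\sin(2\pi h/H)}{\pi h}+\frac{2}{H}\int_{1/H}^{1/2}\frac{\cos(2\pi ht)}{t}\,dt.
\]

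The bound $a_h\ll(\log 2H)/H$ is the trivial one: placing absolute values inside the defining integral gives $|a_h|\le\int_{-1/2}^{1/2}f(t)\,dt=(2/H)(1+\log(H/2))\ll(\log 2H)/H$. For $a_h\ll 1/|h|$ I work with the displayed formula: the $\sin$-term is plainly $\le 1/(\pi|h|)$; in the cosine integral I integrate by parts with $u=1/t$ and $dv=\cos(2\pi ht)\,dt$, which produces a boundary contribution of size $O(H/|h|)$ (dominated by $t=1/H$) and a surviving integral majorised by $(1/|h|)\int_{1/H}^{1/2}dt/t^2\ll H/|h|$; multiplication by the prefactor $2/H$ then yields $O(1/|h|)$.

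For the subtler bound $a_h\ll H/|h|^2$ I integrate the defining integral of $a_h$ by parts directly. The boundary term vanishes, since $f(\pm 1/2)=2/H$ together with $e(h/2)=e(-h/2)$, and using the piecewise derivative $f'(t)=-\operatorname{sgn}(t)/(Ht^2)$ on $|t|>1/H$ I obtain
\[
a_h=\frac{1}{\pi hH}\int_{1/H}^{1/2}\frac{\sin(2\pi ht)}{t^2}\,dt.
\]
A second integration by parts with $u=1/t^2$, $dv=\sin(2\pi ht)\,dt$ then produces boundary terms of size $\ll H^2/|h|$ (dominated by the lower endpoint) and a remainder $\ll(1/|h|)\int_{1/H}^{1/2}dt/t^3\ll H^2/|h|$; dividing by $\pi hH$ gives the claimed $O(H/|h|^2)$.

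The argument is entirely elementary; the only delicate point is verifying that the boundary term in the first integration by parts vanishes, which is exactly what makes room for the second integration to produce the extra factor $1/h$ needed for the third bound.
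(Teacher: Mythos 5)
Your proof is correct, and it is genuinely more than the paper offers: the paper proves nothing here, simply citing Lemma~2 of Zhang--Li \cite{Zhang-Li1} (a standard expansion going back to the classical treatment of $\min(1,1/(H\|t\|))$ in work on Piatetski-Shapiro--type problems). Your argument is the expected one and all three bounds check out: the explicit computation $a_h=\frac{\sin(2\pi h/H)}{\pi h}+\frac{2}{H}\int_{1/H}^{1/2}\frac{\cos(2\pi ht)}{t}\,dt$ is right, the trivial bound $|a_h|\le\int f\ll(\log 2H)/H$ is right, one integration by parts gives $O(1/|h|)$, and the key observation that the boundary term at $t=\pm 1/2$ cancels (since $f(\pm1/2)=2/H$ and $e(\pm h/2)=(-1)^h$) lets the second integration by parts extract the extra factor $1/|h|$ for the bound $O(H/h^2)$; note that absolute convergence of $\sum_h a_h$ (from $a_h\ll H/h^2$) together with continuity of $f$ already yields the pointwise identity, so the appeal to Dirichlet--Jordan is more than is needed. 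Two small points you should tidy up: (i) the case $1\le H<2$, where $1/H\ge 1/2$ and the splitting of $[-1/2,1/2]$ at $\pm1/H$ is vacuous ($f\equiv 1$, $a_0=1$, $a_h=0$ for $h\ne 0$, and the stated bounds still hold since $\log 2H\gg H$ there); and (ii) the constant coefficient $a_0$, for which only the first bound is relevant and which your trivial estimate already covers. Neither is a gap in substance.
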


\section{Proof of the Theorem}
\indent

From \eqref{Gamma}, \eqref{Salpha},  \eqref{Gamma1}  and \eqref{Gamma2} we have
\begin{equation}\label{GammaGamma12}
\Gamma=\int\limits_{0}^{1}S^3(\alpha)e(-N\alpha)\,d\alpha=\Gamma_1+\Gamma_2.
\end{equation}

\textbf{Estimation of $\Gamma_1$}

We write
\begin{equation}\label{Gamma1decomp}
\Gamma_1=(\Gamma_1-\widetilde{\Psi})+(\widetilde{\Psi}-\Psi_3)+\Psi_3.
\end{equation}
Bearing in mind \eqref{Theta} and \eqref{Psik} we obtain
\begin{equation*}
\Psi_1=\int\limits_{-1/2}^{1/2}\Theta(\alpha)e(-N\alpha)\,d\alpha
=\frac{1}{1+\log y(N)}.
\end{equation*}
Suppose that
\begin{equation}\label{Supposition}
\Psi_k=\frac{1}{1+\log y(N)}X^{k-1}+\mathcal{O}\big(X^{k-2}\big) \quad \mbox{for} \quad k\geq2.
\end{equation}
Then
\begin{align*}
\Psi_{k+1}&=\sum\limits_{m\leq N} \frac{1}{1+\log y(m)}
\Bigg(\mathop{\sum\limits_{m_1\leq N-m}\cdots\sum\limits_{m_k\leq N-m}}_
{m_1+\cdots+m_k=N-m}\frac{1}{1+\log y(m_1)}\cdots\frac{1}{1+\log y(m_k)}\Bigg)\\
&=\sum\limits_{m\leq N} \frac{1}{1+\log y(m)}\Bigg(\frac{1}{1+\log y(N-m)}X^{k-1}+\mathcal{O}\big(X^{k-2}\big)\Bigg)\\
&=\sum\limits_{m\leq N} \frac{1}{1+\log y(m)}\cdot\frac{1}{1+\log y(N-m)}X^{k-1}+\mathcal{O}\big(X^{k-1}\big)\\
&= \frac{1}{1+\log y(N)}X^k+\mathcal{O}\big(X^{k-1}\big).
\end{align*}
Consequently the supposition \eqref{Supposition} is true.\\
From \eqref{XlogXN} and \eqref{ylogyt} it follows that
\begin{equation}\label{yNX}
y(N)=X.
\end{equation}
Bearing in mind \eqref{Supposition} and \eqref{yNX} we conclude that
\begin{equation}\label{PsikAsymptoticformula}
\Psi_k=\frac{X^{k-1}}{1+\log X}+\mathcal{O}\big(X^{k-2}\big) \quad \mbox{for} \quad k\geq2.
\end{equation}
Now the asymptotic formula \eqref{PsikAsymptoticformula} gives us
\begin{equation}\label{Psi3Asymptoticformula}
\Psi_3=\frac{X^2}{1+\log X}+\mathcal{O}\big(X\big).
\end{equation}
From \eqref{Gamma1} and \eqref{Psiwidetilde} we get
\begin{align}\label{Gamma1Psi1}
|\Gamma_1-\widetilde{\Psi}|&\ll\int\limits_{-\tau}^{\tau}
\big|S^3(\alpha)-\Theta^3(\alpha)\big|\,d\alpha\nonumber\\
&\ll\max\limits_{|\alpha|\leq \tau}\big|S(\alpha)-\Theta(\alpha)\big|
\Bigg(\int\limits_{-\tau}^{\tau}|S(\alpha)|^2\,d\alpha
+\int\limits_{-1/2}^{1/2}|\Theta(\alpha)|^2\,d\alpha \Bigg).
\end{align}
Arguing as in (\cite{Dimitrov},  Lemma 8) we find
\begin{equation}\label{IntSalpha}
\int\limits_{-\tau}^{\tau}|S(\alpha)|^2\,d\alpha \ll X\log X.
\end{equation}
Square out and integrate we obtain
\begin{equation}\label{1IntTheta}
\int\limits_{-1/2}^{1/2}|\Theta(\alpha)|^2\,d\alpha
\ll \frac{N}{\log^2N} \ll X.
\end{equation}
Now we shall estimate from above $|S(\alpha)-\Theta(\alpha)|$  for $|\alpha|\leq \tau$.\\
Our argument is a modification of Zhang's and Li's  \cite{Zhang-Li2} argument.\\
From \eqref{tau} and \eqref{Salpha} we get
\begin{align}\label{Salphaest1}
S(\alpha)&=\sum\limits_{p\leq X}e(\alpha p\log p)\log p+\mathcal{O}\big(\tau X\big)
=\sum\limits_{n\leq X} \Lambda(n)e(\alpha n\log n)
+\mathcal{O}\big( X^{1/2}\big)+\mathcal{O}\big(\tau X\big)\nonumber\\
&=\sum\limits_{n\leq X} \Lambda(n)e(\alpha n\log n)+\mathcal{O}\big(X^{1/2}\big).
\end{align}
From $|\alpha|\leq \tau$, $y\geq2$  and Lemma \ref{ThetaS} we have that
\begin{equation}\label{ThetaSgives}
\sum_{1<m\le y}e(m\alpha)=\int\limits_{1}^{y}e(\alpha t)\,dt+\mathcal{O}(1).
\end{equation}
Using \eqref{ylogyt}, \eqref{y'}, \eqref{tau}, \eqref{Theta},
\eqref{ThetaSgives} and partial summation we find
\begin{align}\label{LambdaTheta}
\sum\limits_{n\leq X} \Lambda(n)e(\alpha n\log n)&
=\int\limits_{1}^{X}e(\alpha y\log y)\,d\bigg(\sum\limits_{n\leq y} \Lambda(n)\bigg) \nonumber \\
&=\int\limits_{1}^{X}e(\alpha y\log y)\,dy+\mathcal{O}\Big(X\exp\big(-(\log X)^{1/3}\big)\Big) \nonumber \\
&=\int\limits_1^Ne(\alpha t)
\frac{1}{1+\log y(t)}\,dt+\mathcal{O}\Big(X\exp\big(-(\log X)^{1/3}\big)\Big) \nonumber \\
&=\int\limits_1^N\frac{1}{1+\log y(t)}\,d\Bigg(\int\limits_1^te(\alpha u)\,du\Bigg)
+\mathcal{O}\Big(X\exp\big(-(\log X)^{1/3}\big)\Big)\nonumber \\
&=\int\limits_1^N\frac{1}{1+\log y(t)}
\,d\Bigg(\sum_{1<m\le t}e( m\alpha)+\mathcal{O}(1)\Bigg)\nonumber \\
&+\mathcal{O}\Big(X\exp\big(-(\log X)^{1/3}\big)\Big)\nonumber \\
&=\sum\limits_{ m\leq N}\frac{1}{1+\log y(m)}e(m\alpha)
+\mathcal{O}\Big(X\exp\big(-(\log X)^{1/3}\big)\Big)\nonumber \\
&=\Theta(\alpha)+\mathcal{O}\Big(X\exp\big(-(\log X)^{1/3}\big)\Big).
\end{align}
From \eqref{Salphaest1} and \eqref{LambdaTheta} it follows that
\begin{equation}\label{SalphaThetaalpha}
\max\limits_{|\alpha|\leq \tau}|S(\alpha)-\Theta(\alpha)|\ll X\exp\big(-(\log X)^{1/3}\big).
\end{equation}
Taking into account \eqref{Gamma1Psi1}, \eqref{IntSalpha}, \eqref{1IntTheta} and \eqref{SalphaThetaalpha}
we conclude
\begin{equation}\label{Gamma1widetildePsiest}
\Gamma_1-\widetilde{\Psi}\ll X^2\exp\big(-(\log X)^{1/3-\varepsilon}\big).
\end{equation}
Using \eqref{tau}, \eqref{Psik}, \eqref{Psiwidetilde} and working as in (\cite{Vaughan}, Lemma 2.8)  we deduce
\begin{equation}\label{Psi3-widetildePsi}
\big|\Psi_3-\widetilde{\Psi}\big|\ll\int\limits_{\tau\leq|\alpha|\leq1/2}
|\Theta(\alpha)|^3\,d\alpha\ll\int\limits_{\tau}^{1/2}\frac{d\alpha}{\alpha^3}
\ll X^{\frac{46}{25}}.
\end{equation}
Summarizing \eqref{Gamma1decomp}, \eqref{Psi3Asymptoticformula}, \eqref{Gamma1widetildePsiest}
and \eqref{Psi3-widetildePsi} we obtain
\begin{equation}\label{Gamma1asymptotic}
\Gamma_1=\frac{X^2}{1+\log X}+\mathcal{O}\Big(X^2\exp\big(-(\log X)^{1/3-\varepsilon}\big)\Big).
\end{equation}

\textbf{Estimation of $\Gamma_2$}

From \eqref{Gamma2} we get
\begin{equation}\label{Gamma2est1}
\Gamma_2\ll \max\limits_{\tau\leq\alpha\leq 1-\tau}|S(\alpha)|
\int\limits_{0}^{1}|S(\alpha)|^2\,d\alpha\ll
X(\log X)\max\limits_{\tau\leq\alpha\leq 1-\tau}|S(\alpha)|.
\end{equation}
By \eqref{Salpha} and Lemma \ref{Buriev} with $x=\alpha$, $y=n\log n$
and
\begin{equation}\label{H}
H=X^{\frac{1}{25}}
\end{equation}
it follows
\begin{align*}
S(\alpha)&=\sum\limits_{n\leq X}\Lambda(n)e(\alpha n\log n)e(-\alpha\{n\log n\})+\mathcal{O}( X^{1/2})\\
&=\sum\limits_{|h|\leq H}c_h(\alpha)\sum\limits_{n\leq X}\Lambda(n)
e\big((h+\alpha) n\log n\big)\\
&+\mathcal{O}\left((\log X)\sum\limits_{n\leq X}\min\left(1, \frac{1}{H\| n\log n\|}\right)\right).
\end{align*}
Therefore
\begin{equation}\label{Salphaest2}
\max\limits_{\tau\leq\alpha\leq 1-\tau}|S(\alpha)|\ll (S_1+S_2)\log X,
\end{equation}
where
\begin{align}
\label{S1}
&S_1=\max\limits_{\tau\leq\alpha\leq H+1}
\Big|\sum\limits_{n\leq X}\Lambda(n)e(\alpha n\log n)\Big|,\\
\label{S2}
&S_2=\sum\limits_{n\leq X}\min\left(1, \frac{1}{H\| n\log n\|}\right).
\end{align}
Bearing in mind \eqref{tau}, \eqref{H} and \eqref{S1}, according to (\cite{Dimitrov}, Lemma 9) we conclude
\begin{equation}\label{S1est}
S_1\ll X^{24/25}\log^3X.
\end{equation}
By \eqref{H}, \eqref{S2}, Lemma \ref{Korput}, Lemma \ref{Expansion} and $Y\leq X/2$  we obtain
\begin{align}\label{S2est}
S_2&\ll(\log X)\sum\limits_{Y<n\leq 2Y}\min\left(1, \frac{1}{H\| n\log n\|}\right)\nonumber\\
&\leq(\log X)\sum\limits_{h=-\infty}^{+\infty}|a_h|\bigg|
\sum\limits_{Y<n\leq 2Y}e(h  n\log n)\bigg|\nonumber\\
&\ll(\log X)\Bigg(\frac{Y\log 2H}{H}+\frac{Y^{1/2}\log 2H}{H}
\sum\limits_{h\leq H}h^{1/2}
+Y^{1/2}H\sum\limits_{h>H}h^{-3/2}\Bigg)\nonumber\\
&\ll XH^{-1}\log^2X\nonumber\\
&\ll X^{24/25}\log^2X.
\end{align}
From \eqref{Gamma2est1}, \eqref{Salphaest2}, \eqref{S1est} and \eqref{S2est} we find
\begin{equation}\label{Gamma2est2}
\Gamma_2\ll X^{49/25}\log^5X.
\end{equation}

\textbf{The end of the proof}

Bearing in mind \eqref{GammaGamma12}, \eqref{Gamma1asymptotic} and \eqref{Gamma2est2}
we establish the asymptotic formula \eqref{Asymptoticformula}.

The Theorem is proved.

\vskip18pt
\footnotesize
\begin{flushleft}
S. I. Dimitrov\\
Faculty of Applied Mathematics and Informatics\\
Technical University of Sofia \\
8, St.Kliment Ohridski Blvd. \\
1756 Sofia, BULGARIA\\
e-mail: sdimitrov@tu-sofia.bg\\
\end{flushleft}

\end{document}